\newtheorem{thm}{Theorem}[section]
\newtheorem*{thm*}{Theorem}
\newtheorem*{conj*}{Conjecture}
\newtheorem{cor}[thm]{Corollary}
\newtheorem{prop}[thm]{Proposition}
\theoremstyle{remark}
\theoremstyle{definition}
\newtheorem{defn}[thm]{Definition}
\newcounter{claim}[thm]
\newcommand{\Z}{\mathbb{Z}} % Integers
\DeclareMathOperator{\Cay}{Cay}
\crefname{lem}{lemma}{lemmas}
\Crefname{lem}{Lemma}{Lemmas}
\title{Two new families of non-CCA groups}
\author{Brandon Fuller}
\author{Joy Morris}
\address{Department of Mathematics and Computer Science\\
University of Lethbridge\\
Lethbridge, AB T1K 3M4\\
Canada}\thanks{This work was supported by the Natural Science and Engineering Research Council of Canada (grant RGPIN-2017-04905).}
\email{joy.morris@uleth.ca}\email{brandon.fuller621@gmail.com}	
\begin{document}

\begin{abstract}
We determine two new infinite families of Cayley graphs that admit colour-preserving automorphisms that do not come from the group action. By definition, this means that these Cayley graphs fail to have the CCA (Cayley Colour Automorphism) property, and the corresponding infinite families of groups also fail to have the CCA property. The families of groups consist of the direct product of any dihedral group of order $2n$ where $n \ge 3$ is odd, with either itself, or the cyclic group of order $n$. In particular, this family of examples includes the smallest non-CCA group that had not fit into any previous family of known non-CCA groups.
\end{abstract}

\maketitle

\section{Introduction}
All groups and graphs in this paper are finite. All of our graphs are simple, undirected, and have no loops.

A Cayley graph of $G$ with respect to $C$ (a subset of $G \backslash \{e\}$) is the graph $Cay(G,C)$ whose vertices are the elements of $G$, with an edge from $g$ to $gc$ if and only if $g\in G, c\in C$.
The set $C$ is known as the connection set of $Cay(G,C)$. This connection set gives a natural colouring of the edges where we colour the edge from $g$ to $gc$ (which is the same as the edge from $gc$ to $g$) with a colour associated to
$\{c,c^{-1}\}$. A colour-preserving automorphism of $Cay(G,C)$ is a permutation of the vertices that preserves edges and non-edges as well as edge colour. A Cayley graph $Cay(G,C)$ is said to have the 
\textbf{Cayley Colour Automorphism (CCA) property} if every colour-preserving automorphism of the graph is an affine function on $G$. The group $G$ is said to be CCA if every connected Cayley graph of $G$ is CCA.

The study of this property has only come up recently in history. In 2012, M.~Conder, T.~Pizanski and A.~\v{Z}itnik \cite{gi-graphs} proposed a question 
about the permutations on circulant graphs that preserved a certain edge colouring that the second author 
\cite{aut-circ-part} answered. The second author showed that for any connected circulant graph  all 
colour-preserving automorphisms that fix the identity are automorphisms of $\Z_n$. In 2014, A.~Hujdurovi\'c, K.~Kutnar, D.~W.~Morris, and J.~Morris \cite{col-pres} extended 
the original question by looking at Cayley graphs, using the natural edge colouring described. 
In early 2017, L.~Morgan, J.~Morris and G.~Verret \cite{sylow-cyclic}, \cite{cca-simple} gave some new results for finite simple groups and Sylow cyclic groups that generalized results produced by E.~Dobson, A.~Hujdurovi\'c, K.~Kutnar, and J.~Morris in \cite{odd-square-free}. The problem of determining colour-preserving and colour-permuting automorphisms for directed Cayley graphs has 
already been studied and is well understood: see for example 
 \cite{ggs}, where the authors
showed that for every connected Cayley digraph, every colour-preserving automorphism is a left-translation by some element of the group. 

In his M.Sc. thesis, the first author produced code that determines whether or not a group or graph has the CCA property, and ran this code on all groups of order up to order 200 (excluding orders 128 and 192). With this data in hand, a logical step was to try to find theoretical methods to explain some of the small non-CCA groups that were not previously understood, and if possible to find new infinite families of non-CCA groups using this method.

In this paper, we will use results from \cite{sylow-cyclic} to show that whenever $n \ge 3$, the groups $C_n \times D_{2n}$ and $D_{2n}\times D_{2n}$ are non-CCA groups. 
Section~\ref{background} will contain some basic background, definitions, and notation, along with the statements of the results we need from~\cite{sylow-cyclic}. 
Section~\ref{proofs} will provide proofs of our main results.

\section{Background}\label{background}

%{\color{red}MORE DEFINITIONS! Put definitions and notation immediately before the result where they will be used. 
%Introduce them: e.g. "The following definitions and notation appear in ..., and will be required for understanding the result we will be using." 
%The $\hat{G}$ notation needs introducing. So does the notation $\Gamma(v)$ for the neighbours of $v$. 
%That's if we're going to use them, but do put some thought into whether they are needed. 
%If we can say something directly and clearly without introducing the notation and we're not going to be using it over and over, then it's better not to introduce the notation.}

We will use the following notation for the remainder of this paper. We use $C_n$ to represent the cyclic group of order $n$ and $D_{2n}$ (for $n \geq 3$) to represent the dihedral group of order $2n$. 
We also have $Q_8$ as the quaternion group of order 8. 

The notation $\Gamma = (V(\Gamma), E(\Gamma))$ will represent a graph of finite order, consisting of a set 
$V = V(\Gamma)$ of vertices and a set $E=E(\Gamma) \subseteq V \times V$ of edges. The set of vertices that are adjacent to a vertex $v$, denoted $N(v)$, is called
the neighbours of $v$. We use $\mathcal{L}(\Gamma)$ to indicate the line graph of the graph $\Gamma$, and $\mathcal{S}(\Gamma)$ is the subdivision graph of 
the graph $\Gamma$.

If $G$ acts on a graph $\Gamma$ and $S \subseteq V(\Gamma)$ then $G^S$ is the restriction of the action of $G$ to $S$.
We use $G_{v}$ to denote the stabiliser subgroup (elements of $G$ that fix $v$).

\begin{defn} \cite[Definition.~2.6]{col-pres} \label{def:gendic}
For an abelian group $A$ of even order and an involution $y \in A$, the corresponding \textbf{generalized dicyclic group} is 
\begin{displaymath}
Dic(A,y) = \langle x, A \mid x^2 = y, x^{-1}ax = a^{-1} \forall a \in A \rangle. 
\end{displaymath}
\end{defn}

\begin{defn} \cite[Definition.~5.1]{col-pres} \label{def:gendih}
 The \textbf{generalized dihedral group} over an abelian group $A$ is the group
\begin{displaymath}
 Dih(A) = \langle \sigma, A \mid \sigma^2 = e, \sigma a \sigma = a^{-1} \forall a \in A \rangle
\end{displaymath}
\end{defn}

\begin{defn} [{}{\cite[Definition~4.5]{sylow-cyclic}}] \label{def:ccp}
 Let $B$ be a permutation group and $G$ a regular subgroup of $B$. Let $\mathcal{A}^0$ be the colour-preserving automorphism group for the complete Cayley colour graph $K_G=\Cay(G,G\setminus\{e\})$, and 
 let $\widehat{G}$ be the subgroup of $\mathcal{A}^0$ consisting of all left translations by elements of $G$. We say that $(G,B)$ is a \textbf{complete colour pair} if $B$ is 
 a subgroup of $\mathcal{A}^0$ and $G$ is one of the following:
\begin{itemize}
 \item $G$ is abelian but not an elementary abelian 2-group, and $\mathcal{A}^0 = Dih(G)$.
 \item $G\cong Dic(A,y)$ but not of the form $Q_8 \times C^n_2$ and $\mathcal{A}^0 = \widehat{G} \rtimes \langle \sigma \rangle$, where $\sigma$ is the permutation that fixes $A$
pointwise and maps every element of the coset $Ax$ to its inverse.
 \item $G\cong Q_8 \times C^n_2$ and $\mathcal{A}^0 = \langle \widehat{G}, \sigma_i, \sigma_j, \sigma_k \rangle$, where $\sigma_\alpha$ is the permutation of 
$Q_8 \times C^n_2$ that inverts every element of $\{ \pm \alpha \} \times C^n_2$ and fixes every other element.
\end{itemize}
\end{defn}

The importance of Definition \ref{def:ccp} comes from the fact that if $(G,B)$ is a complete colour pair, then in each case we have a colour-preserving automorphism
of $K_G$ that is not an element of $\widehat{G}$.

An \emph{arc} is an orientation for an edge in a graph. So the edge $\{u,v\}$ contains two arcs: $(u,v)$, or $(v,u)$, depending on the orientation we choose.

\begin{defn} [{}{\cite[Definition~1.1]{gtac}}]
  Let $\Gamma$ be a graph and $G$ a permutation group acting on the vertices of $\Gamma$. We say that $\Gamma$ is a \textbf{$G$-arc-regular graph} if for each pair of arcs 
  $e_1=(u,v)$ and $e_2=(w,x)$ (each an oriented edge from $E(\Gamma)$), there exists a unique element of $G$ that maps $u$ to $w$ and $v$ to $x$, so that it maps the chosen orientation for $e_1$ to the chosen orientation for $e_2$.
\end{defn}

\begin{cor} [{}{\cite[Corollary~4.10]{sylow-cyclic}}] \label{cor:argls}
 Let $\Gamma$ be a connected $G$-arc-regular graph. If $H$ is a group of automorphisms of $\Gamma$ such that:
 \begin{itemize}
  \item $G \leq H$, and
  \item $(G^{N(v)}_v, H^{N(v)}_v)$ is a complete colour pair for every vertex $v$ of $\Gamma$,
 \end{itemize}
  then $H$ is a colour-preserving group of automorphisms of $\mathcal{L}(\mathcal{S}(\Gamma))$ viewed as a Cayley graph on $G$.
\end{cor}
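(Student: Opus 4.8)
The plan is to realize $\mathcal{L}(\mathcal{S}(\Gamma))$ as a Cayley colour graph on $G$, describe its two types of edges together with their colours, and then verify that each element of $H$ preserves those colours by reducing the question to the local action at a single base vertex.

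First I would set up the identification. Since $\Gamma$ is $G$-arc-regular, $G$ acts regularly on the arcs of $\Gamma$; the vertices of $\mathcal{L}(\mathcal{S}(\Gamma))$ are exactly these arcs, because each edge of $\mathcal{S}(\Gamma)$ joins an original vertex to a subdivision vertex and hence records a vertex together with an incident edge of $\Gamma$. Fixing a base arc $a_0=(v_0,e_0)$ and writing each arc as $g\,a_0$ identifies $V(\mathcal{L}(\mathcal{S}(\Gamma)))$ with $G$, so that the left-translation action of $G$ is regular and $\mathcal{L}(\mathcal{S}(\Gamma))$ becomes a Cayley graph on $G$. Two arcs are adjacent precisely when the corresponding edges of $\mathcal{S}(\Gamma)$ meet, which happens in two ways: they share their tail vertex (a \emph{Type~1} edge) or they are the two halves of a single edge of $\Gamma$ (a \emph{Type~2} edge). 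Computing differences in $G$, the Type~1 edges among the arcs with tail $v_0$ correspond to the connection-set elements lying in $G_{v_0}\setminus\{e\}$, while every Type~2 edge has the form $\{g,gc_0\}$, where $c_0\,a_0$ is the partner of $a_0$; thus all Type~2 edges carry the single colour $\{c_0,c_0^{-1}\}$, distinct from the Type~1 colours since $c_0\notin G_{v_0}$.

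Next I would record two easy facts. Because $\Gamma$ is arc-transitive it is vertex-transitive, so $G_{v_0}$ acts faithfully and regularly on $N(v_0)$; consequently the clique of arcs with tail $v_0$, carried with its inherited Cayley colouring, is exactly the complete Cayley colour graph $K_{G_{v_0}^{N(v_0)}}$. Also, every $h\in H\le\Aut(\Gamma)$ induces an automorphism of $\mathcal{L}(\mathcal{S}(\Gamma))$ that respects the whole tower $\Gamma\to\mathcal{S}(\Gamma)\to\mathcal{L}(\mathcal{S}(\Gamma))$, and therefore preserves the Type~1/Type~2 distinction. Since there is only one Type~2 colour and partners are sent to partners, colour-preservation on Type~2 edges is automatic.

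The heart of the argument is colour-preservation on Type~1 edges, and here I would exploit the hypothesis at the base vertex. The key observation is that the action of $h\in H$ on the clique of arcs with tail $u$ factors through $v_0$: choosing $g_u,g_{hu}\in G$ with $g_u v_0=u$ and $g_{hu}v_0=hu$, the element $h_0:=g_{hu}^{-1}hg_u$ lies in $H_{v_0}$, and as permutations of the arcs $h=g_{hu}\,h_0\,g_u^{-1}$. On the clique at $u$ the factor $g_u^{-1}$ maps it to the clique at $v_0$, then $h_0$ maps the clique at $v_0$ to itself, then $g_{hu}$ maps it to the clique at $hu$. The outer factors are left-translations by elements of $G$, hence colour-preserving, while by the complete-colour-pair hypothesis and the paragraph following \Cref{def:ccp}, $H_{v_0}^{N(v_0)}$ lies in the colour-preserving automorphism group $\mathcal{A}^0$ of $K_{G_{v_0}^{N(v_0)}}$, so $h_0$ is colour-preserving on the clique at $v_0$. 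Composing, $h$ is colour-preserving on the clique at $u$; as $u$ was arbitrary this covers all Type~1 edges, and together with the Type~2 case every $h\in H$ preserves all colours. (The hypothesis is assumed at every vertex, but by vertex-transitivity the condition at $v_0$ alone suffices for this factorisation.) I expect the main obstacle to be the bookkeeping in the first step, namely pinning down the Cayley structure and the exact connection set and colouring of $\mathcal{L}(\mathcal{S}(\Gamma))$ from arc-regularity, since once the two edge types and the local identification with $K_{G_{v_0}^{N(v_0)}}$ are in place, the factorisation $h=g_{hu}h_0g_u^{-1}$ makes the colour-preservation essentially formal.
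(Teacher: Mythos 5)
This statement is Corollary~4.10 of \cite{sylow-cyclic}; the paper you were given quotes it without proof, so there is no internal argument here to compare yours against. Your reconstruction is correct and follows the same route as the cited source: identify the vertices of $\mathcal{L}(\mathcal{S}(\Gamma))$ with the arcs of $\Gamma$ (on which $G$ acts regularly, giving the Cayley structure), split the edges into tail-sharing clique edges and the single-coloured reversal edges, and reduce colour-preservation on the cliques to the local complete-colour-pair hypothesis by conjugating with left translations back to a base vertex. The only slip is cosmetic: the containment $H_{v}^{N(v)}\le\mathcal{A}^0$ that you invoke is part of Definition~\ref{def:ccp} itself (the requirement that $B$ be a subgroup of $\mathcal{A}^0$), not of the paragraph following it, which concerns producing non-affine automorphisms.
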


The real point of this corollary is that if we can show that some element of $H$ is not an affine function, then we can conclude that $\mathcal{L}(\mathcal{S}(\Gamma))$ is a non-CCA graph, and so $G$ is a non-CCA group. The fact that $(G^{N(v)}_v, H^{N(v)}_v)$ is a complete colour pair is what allows us to produce the non-affine automorphism.

\section{Main Results}\label{proofs}

In our main result, we will show that $K_{n,n}$ is a (connected) $C_n \times D_{2n}$-arc-regular graph and therefore that if we take $\Gamma=K_{n,n}$, $G=C_n \times D_{2n}$, and $H=D_{2n} \wr C_2$ then all of the conditions of Corollary \ref{cor:argls} are satisfied. For clarity (since both orders are used often in the literature around wreath products), by $D_{2n} \wr C_2$ we intend the semidirect product $(D_{2n} \times D_{2n}) \rtimes C_2$, where the $C_2$ is acting on the coordinates in the direct product.  Hence $D_{2n} \wr C_2$
is a colour-preserving group of automorphisms of $\mathcal{L}(\mathcal{S}(K_{n,n}))$. With this we will find an element in $D_{2n} \wr C_2$, a colour-preserving automorphism, that is a 
non-affine function to show that $\mathcal{L}(\mathcal{S}(K_{n,n}))$ is non-CCA. The proof is not particularly difficult; the difficulty of this result lies in finding an arc-regular graph and corresponding permutation groups to which we can apply Corollary~\ref{cor:argls}.

\begin{thm} \label{thm:cnxd2n}
 The graph $\mathcal{L}(\mathcal{S}(K_{n,n}))$ viewed as a Cayley graph on $C_n \times D_{2n}$ is non-CCA whenever $n \geq 3$ is odd. 
 
 Specifically, if $G=\langle \rho_1, \rho_2, \tau\rangle$ and $C=\{\tau\} \cup \{\rho_2^i: 1 \le i \le n-1\}$, then $\sigma_2$ is a non-affine colour-preserving automorphism on $\Cay(G,C)$.
\end{thm}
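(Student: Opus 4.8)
The plan is to verify the hypotheses of \cref{cor:argls} for the explicit data $\Gamma=K_{n,n}$, $G=C_n\times D_{2n}$, and $H=D_{2n}\wr C_2$, and then to exhibit the promised non-affine element directly. Write $X=\{x_0,\dots,x_{n-1}\}$ and $Y=\{y_0,\dots,y_{n-1}\}$ for the two parts of $K_{n,n}$, let the two dihedral factors of $H=(D_{2n}\times D_{2n})\rtimes\grp{s}$ be $\grp{r,t}$ acting on $X$ (with $r\colon x_i\mapsto x_{i+1}$, $t\colon x_i\mapsto x_{-i}$) and $\grp{r',t'}$ acting on $Y$ likewise, and let $s$ swap the parts via $x_i\leftrightarrow y_i$. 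First I would record that, because $n$ is odd, the subgroup $G=\grp{\rho_1,\rho_2,\tau}$ with $\rho_1=rr'$, $\rho_2=r(r')^{-1}$, $\tau=s$ is a copy of $C_n\times D_{2n}$ inside $H$: one checks that $\rho_1$ is central of order $n$, that $\tau\rho_2\tau=\rho_2^{-1}$, and that $\grp{\rho_1}\cap\grp{\rho_2,\tau}$ is trivial (here the oddness of $n$ is exactly what forces the intersection to vanish). This pins down the identification of the generators in the statement.

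Next I would establish arc-regularity. Since $K_{n,n}$ has $2n^2$ arcs and $|G|=2n^2$, it suffices to show $G$ is arc-transitive. The part-preserving subgroup $G^+=\grp{\rho_1,\rho_2}=\grp{r}\times\grp{r'}\cong C_n\times C_n$ acts on the set of $X\to Y$ arcs (which I identify with $\Z_n\times\Z_n$) by translation, hence regularly; and $\tau=s$ carries $X\to Y$ arcs to $Y\to X$ arcs. Thus $G$ is transitive on arcs, and by the order count it is arc-regular, so $K_{n,n}$ is a connected $G$-arc-regular graph and \cref{cor:argls} may be applied once the local condition is checked.

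The main obstacle is verifying that $(G_v^{N(v)},H_v^{N(v)})$ is a complete colour pair at every vertex. By arc-transitivity it is enough to treat $v=x_0$. I would compute the stabilisers directly: no element of either swap-coset fixes $x_0$, so $G_{x_0}$ and $H_{x_0}$ lie in the part-preserving subgroups. This gives $G_{x_0}=\grp{(e,r')}\cong C_n$, acting on $N(x_0)=Y$ as the regular cyclic group, while $H_{x_0}$ contains the whole second factor $\grp{r',t'}$, which acts on $Y$ as $\dih{C_n}=D_{2n}$. Since $n\ge3$ is odd, $C_n$ is abelian but not an elementary abelian $2$-group and its complete colour automorphism group is exactly $\dih{C_n}$; hence $(G_{x_0}^{N(x_0)},H_{x_0}^{N(x_0)})=(C_n,\dih{C_n})$ falls under the first bullet of \cref{def:ccp}. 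The delicate point is to confirm that the local action of $H_{x_0}$ realises the full dihedral group and no more, i.e. that the reflections in the second factor, but nothing from the swap-coset, survive in the stabiliser; this is precisely what makes the pair complete. With this, \cref{cor:argls} shows $H=D_{2n}\wr C_2$ is a colour-preserving group of automorphisms of $\mathcal{L}(\mathcal{S}(K_{n,n}))$, viewed as $\Cay(G,C)$ with $C=\{\tau\}\cup\{\rho_2^i:1\le i\le n-1\}$ (the $\grp{\rho_2}$-cliques joined by the $\tau$-matching).

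Finally I would show that $\sigma_2=(e,t')\in H$ is colour-preserving (immediate from the previous step) but not affine. Transporting $\sigma_2$ to a permutation of $G$ via the regular identification of the vertices of $\Cay(G,C)$ with $G$ (base arc $(x_0,y_0)\leftrightarrow e$), I note $\sigma_2$ fixes $(x_0,y_0)$ and hence fixes $e$, so if it were affine it would lie in $\Aut(G)$. A one-line computation on the three base arcs shows the induced map interchanges $\rho_1$ and $\rho_2$ while fixing $\tau$. But $\rho_1$ is central in $G$ whereas $\rho_2$ is not (since $n\ge3$ is odd, $Z(D_{2n})$ is trivial, so $Z(G)=\grp{\rho_1}$), and no automorphism can send a central element to a non-central one. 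Therefore $\sigma_2$ is not a group automorphism, hence not affine, and \cref{cor:argls} lets us conclude that $\Cay(G,C)=\mathcal{L}(\mathcal{S}(K_{n,n}))$ is non-CCA, so $C_n\times D_{2n}$ is a non-CCA group. I expect the arc-regularity and the non-affineness to be short; the real work is the stabiliser bookkeeping that pins down the complete colour pair.
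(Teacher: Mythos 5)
Your overall strategy is the paper's own: take $\Gamma=K_{n,n}$, $G=C_n\times D_{2n}$, $H=D_{2n}\wr C_2$, check arc-regularity and the complete colour pair $(C_n,\dih{C_n})$ at each vertex, apply Corollary~\ref{cor:argls}, and then exhibit $\sigma_2$ as a non-affine element. However, there is a genuine error in your identification of the generators, and it falsifies exactly the ``Specifically'' clause of the statement. You set $\rho_1=rr'$ and $\rho_2=r(r')^{-1}$, whereas in the theorem $\rho_1$ and $\rho_2$ are the cyclic rotations of the two bipartition classes \emph{separately} (your $r$ and $r'$); the group $\grp{\rho_1,\rho_2,\tau}=\grp{\rho_1\rho_2,\rho_1\rho_2^{-1},\tau}$ is the same either way, but the naming matters for $C$. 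With your $\rho_2=r(r')^{-1}$, the set $C=\{\tau\}\cup\{\rho_2^i:1\le i\le n-1\}$ generates $\grp{r(r')^{-1},s}\cong D_{2n}$, a \emph{proper} subgroup of $G$, so $\Cay(G,C)$ would be disconnected ($n$ disjoint copies of a Cayley graph on $D_{2n}$) and cannot be $\mathcal{L}(\mathcal{S}(K_{n,n}))$. Your own stabiliser computation already tells you what the connection set really is: the clique of arcs with tail $x_0$ is reached from the base arc $(x_0,y_0)$ by $G_{x_0}=\grp{r'}$, so $C=\{\tau\}\cup\{(r')^i:1\le i\le n-1\}$, the powers of the rotation of one part together with $\tau$ --- and $r'$ is not your $\rho_2$. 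The repair is purely notational (take $\rho_1=r$, $\rho_2=r'$, and note $G\cong C_n\times D_{2n}$ because $n$ odd gives $\grp{\rho_2^2}=\grp{\rho_2}$), but as written the parenthetical ``$\grp{\rho_2}$-cliques joined by the $\tau$-matching'' is false under your definitions, and the claimed $C$ is not the connection set of $\mathcal{L}(\mathcal{S}(K_{n,n}))$.

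Apart from this, your argument is sound, and your proof of non-affineness takes a genuinely different route from the paper's, one worth noting. The paper shows $\sigma_2$ fails to \emph{normalise} $G$: since $\sigma_2$ fixes both endpoints of the base edge, $\sigma_2\tau\sigma_2$ maps the arc $(v,\tau(v))$ to $(\tau(v),v)$, so by arc-regularity the only candidate for $\sigma_2\tau\sigma_2$ inside $G$ is $\tau$, and an evaluation at $\rho_2(v)$ shows $\sigma_2\tau\sigma_2\neq\tau$ because $\rho_2$ has order $n\ge 3$. You instead transport $\sigma_2$ to a permutation of $G$ fixing $e$ via the regular identification, compute that it interchanges $rr'$ and $r(r')^{-1}$, and observe that no automorphism of $G$ can send the central element $rr'$ (central since $Z(G)=\grp{rr'}$ when $n\ge 3$ is odd) to the non-central element $r(r')^{-1}$. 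Both arguments are correct and short; yours trades the paper's arc-regularity uniqueness argument for a structural invariant (preservation of the centre), which is arguably cleaner, while the paper's has the advantage of never needing to compute the induced permutation of $G$ explicitly.
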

\begin{proof}
 Consider the complete bipartite graph $K_{n,n}$. 
 Let $\rho_1$ be a cyclic permutation on one of the bipartition sets, and $\rho_2$ be a cyclic permutation on the other bipartition set, with $\tau$ an involution that commutes with $\rho_1\rho_2$ and switches the bipartition sets.
 Observe that that $G = \langle \rho_1,\rho_2,\tau \rangle=\langle \rho_1\rho_2,\rho_1\rho_2^{-1},\tau \rangle\cong C_n \times D_{2n}$ since $n$ is odd so that $\langle\rho_2^2\rangle=\langle\rho_2\rangle$. Notice that $G$ acts regularly 
 on the arcs of $K_{n,n}$, so that $K_{n,n}$ is $G$-arc-regular.
 
Let $\sigma_1$ be an involution acting on the first of the bipartition sets that inverts $\rho_1$, and define $\sigma_2$ similarily for the other bipartition set.
 Consider now the group $H = \langle \rho_1, \rho_2, \tau, \sigma_1, \sigma_2 \rangle \cong D_{2n} \wr C_2$ where each copy of $D_{2n}$ acts independently on one of the bipartition sets of $K_{n,n}$, and the $C_2$ (generated by $\tau$) exchanges the coordinates. 
 The first copy of $D_{2n}$ is generated by $\rho_1$ and $\sigma_1$. The second copy is generated by 
 $\rho_2$  and $\sigma_2$. 
 It is clear that $G \leq H$ since $\rho_1, \rho_2, \tau \in H$. 
 
 If we can show that $(G^{N(v)}_v, H^{N(v)}_v)$ is a complete colour pair for an arbitrary vertex $v$ then we
 can use Corollary~\ref{cor:argls} to say that $H$ is a colour-preserving group of automorphisms of $\mathcal{L}(\mathcal{S}(K_{n,n}))$ viewed as a Cayley graph on $G$.

 Let $v$ be an arbitrary vertex of one of the bipartition sets. The neighbours of $v$ are all the elements of the other bipartition set. We notice that $G^{N(v)}_v$ is the subgroup
 of $G$ that fixes $v$ and its action is restricted to the bipartition set that $v$ is not in. Without loss of generality, suppose $\rho_1$ is the cyclic permutation of $N(v)$.
 Since $G = \langle \rho_1,\rho_2,\tau \rangle$, it is not hard to observe that $G^{N(v)}_v = \langle \rho_1 \rangle \cong C_n$. Similarily since 
 $H = \langle \rho_1, \rho_2, \tau, \sigma_1, \sigma_2 \rangle$ we have that $H^{N(v)}_v = \langle \rho_1, \sigma_1 \rangle \cong D_{2n}$. Thus we only need to show that
 $(C_n, D_{2n})$ is a complete colour pair.
 
 We can see $(C_n, D_{2n})$ is a complete colour pair using Definition \ref{def:ccp}. Let $\mathcal{A}^0$ be the colour-preserving automorphism group for the Cayley graph 
 $K_G$. We know that $\mathcal{A}^0 = D_{2n} = Dih(C_n)$ and thus since $C_n$ is abelian and is not an elementary abelian $2$-group ($n \ge 3$), all the properties of the first possibility for a complete colour pair are met. (In this case, $B=D_{2n}=\mathcal A^0$.) We thus conclude (using Corollary~\ref{cor:argls}) that every element of $H$ is a colour-preserving automorphism of $\mathcal{L}(\mathcal{S}(K_{n,n}))$ viewed as a Cayley graph on $G$.

It remains to show that some element of $H$ is not affine; in other words, that it fails to normalise $G$. We claim that $\sigma_2$ is such an element. In order to prove this, we will show that $\sigma_2^{-1}\tau\sigma_2$ is not an element of $G$. Let $v$ be the unique vertex in the second bipartition set that is fixed by $\sigma_2$. Clearly, $\sigma_2^{-1}\tau\sigma_2=\sigma_2\tau\sigma_2$ maps the arc $(v, \tau(v))$ to the arc $(\tau(v),v)$, since $\sigma_2$ fixes both $v$ and $\tau(v$). Since $G$ is acting arc-regularly, it has a unique element that maps $(v, \tau(v))$ to the arc $(\tau(v),v)$, and we know that this element is $\tau$. So if $\sigma_2$ normalises $G$, we must have $\sigma_2\tau\sigma_2=\tau$. It is straightforward to verify that this is not the case. For example, %$\tau\rho_2(v)=\tau\rho_1\rho_2(v) =\rho_1\rho_2\tau(v)=\rho_1\tau(v)$ (the first equality follows from the fact that $\rho_1$ fixes the bipartition set that contains $v$; the second equality from the fact that $\tau$ and $\rho_1\rho_2$ commute, and the third from the fact that $\rho_2$ fixes the bipartition set that does not contain $v$). However, 
$\sigma_2 \tau\sigma_2\rho_2(v)=\tau\sigma_2 \rho_2(v)=\tau\rho_2^{-1}\sigma_2(v)=\tau\rho_2^{-1}(v)$ (the first equality follows because $\sigma_2$ fixes the bipartition set that does not contain $v$; the second because $\langle \sigma_2, \rho_2\rangle \cong D_{2n}$, so $\sigma_2$ inverts $\rho_2$; and the third because $\sigma_2$ fixes $v$). However, since $n \ge 3$, this is not the same as $\tau\rho_2(v)$, because the order of $\rho_2$ is $n$. Thus, $\sigma_2 \in H$ does not normalise $G$, as claimed.

We have shown that $\sigma_2\in H$ is not an affine function on $G$, even though it is a colour-preserving automorphism which 
implies that $\mathcal{L}(\mathcal{S}(K_{n,n}))$ is a Cayley graph on $G$ that is not CCA.  As we state in the next corollary, this implies that $C_n \times D_{2n}$ is non-CCA.

If we label the edges of $\mathcal {S}(K_{n,n})$ by labelling the edge from $\tau(v)$ to the vertex subdividing $\{v,\tau(v)\}$ with the identity element $e$ of $G$, and labelling each other edge by the unique element of $G$ that maps the edge $e$ to that edge, this produces the labeling that shows us that $\mathcal{L}(\mathcal{S}(K_{n,n}))$ is a Cayley graph on $G$. From this it is straightforward to observe that the connection set $C$ (which consists of all neighbours of $e$) is as claimed.
 \end{proof}
 
 \begin{cor} \label{cor:cnxd2n}
  The group $C_n \times D_{2n}$ is non-CCA whenever $n \geq 3$ is odd.
 \end{cor}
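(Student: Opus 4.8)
The plan is to observe that this corollary is an essentially immediate consequence of \cref{thm:cnxd2n}, so the work is a matter of unpacking the definition of a non-CCA group rather than proving anything substantial. Recall that by definition $G$ is CCA precisely when \emph{every} connected Cayley graph of $G$ is CCA; equivalently, $G$ is non-CCA as soon as it admits a \emph{single} connected Cayley graph that fails the CCA property. So to conclude that $C_n \times D_{2n}$ is non-CCA, I only need to exhibit one such connected Cayley graph, and \cref{thm:cnxd2n} hands me exactly this: the graph $\mathcal{L}(\mathcal{S}(K_{n,n}))$, viewed as a Cayley graph on $G = C_n \times D_{2n}$ with connection set $C = \{\tau\} \cup \{\rho_2^i : 1 \le i \le n-1\}$, together with the explicit colour-preserving automorphism $\sigma_2$ that was shown not to be affine.

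The only point requiring a brief remark is connectivity, since the CCA property of a group is phrased in terms of \emph{connected} Cayley graphs. Here I would note that $\mathcal{L}(\mathcal{S}(K_{n,n}))$ is connected because it is built from the connected graph $K_{n,n}$ by the operations of subdivision and then taking the line graph, each of which preserves connectivity (for a connected graph with at least one edge); equivalently, the connection set $C$ described in \cref{thm:cnxd2n} generates $G$, which is exactly the statement that the associated Cayley graph is connected. With connectivity in hand, the existence of the non-affine colour-preserving automorphism $\sigma_2$ established in \cref{thm:cnxd2n} shows this connected Cayley graph is not CCA, and therefore $G = C_n \times D_{2n}$ is a non-CCA group whenever $n \ge 3$ is odd. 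I do not anticipate any genuine obstacle; the entire content has already been supplied by the theorem, and the corollary is the translation of that single counterexample into the language of group-level non-CCA-ness.
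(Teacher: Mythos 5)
Your proof is correct and takes essentially the same approach as the paper, which offers no separate argument for the corollary but treats it as an immediate consequence of Theorem~\ref{thm:cnxd2n} (the paper notes at the end of that theorem's proof that the existence of this non-CCA connected Cayley graph on $G$ is exactly what the corollary asserts). Your added remark verifying connectivity of $\mathcal{L}(\mathcal{S}(K_{n,n}))$ is a sensible bit of diligence, not a different route.
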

 
We use the above result to show that $D_{2n} \times D_{2n}$ is not CCA whenever $n \ge 3$ is odd.

\begin{prop} \label{prop:cnxg}
 The group $D_{2n} \times D_{2n}$ is non-CCA whenever $n \geq 3$ is odd.
\end{prop}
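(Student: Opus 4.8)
The plan is to obtain this from Corollary~\ref{cor:cnxd2n} by exhibiting $D_{2n}\times D_{2n}$ as an index-$2$ extension of $G=C_n\times D_{2n}$ and carrying the non-affine colour-preserving automorphism $\sigma_2$ across that extension. Set $z=\rho_1\rho_2$ and $r=\rho_1\rho_2^{-1}$, so that $G=\langle z\rangle\times\langle r,\tau\rangle$ with $\langle z\rangle\cong C_n$ central, $\langle r,\tau\rangle\cong D_{2n}$, and rotation subgroup $R=\langle z,r\rangle=\langle\rho_1,\rho_2\rangle\cong C_n\times C_n$. Reading $\sigma_2$ off the arc-labelling of $\mathcal L(\mathcal S(K_{n,n}))$ used in the proof of Theorem~\ref{thm:cnxd2n}, a direct computation gives that $\sigma_2$ fixes the identity and acts on $G$ by interchanging the two cyclic factors, $z^ar^b\mapsto z^br^a$, together with $\tau z^ar^b\mapsto\tau z^{-b}r^{-a}$. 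In particular $\sigma_2$ carries the central element $z$ to the non-central element $r$, so it is not an automorphism of $G$; this is the reason it is non-affine.

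I would then build the extension as follows. Let $\iota\in\Aut(G)$ be the involution determined by $z\mapsto z^{-1}$, $r\mapsto r^{-1}$, $\tau\mapsto\tau$, and put $\widetilde G=G\rtimes\langle t\rangle$ with $t^2=e$ and $tgt^{-1}=\iota(g)$. The first thing to check is that $\widetilde G\cong D_{2n}\times D_{2n}$: the element $t\tau$ is an involution that inverts $z$ while centralising $r$ and $\tau$, so $\langle r,\tau\rangle\cong D_{2n}$ and $\langle z,t\tau\rangle\cong D_{2n}$ are commuting subgroups meeting trivially whose product is all of $\widetilde G$. Taking the connection set $C=\{\tau\}\cup\{\rho_2^i:1\le i\le n-1\}$ of Theorem~\ref{thm:cnxd2n}, I form the connected Cayley graph $\Cay(\widetilde G,\,C\cup\{t\})$ and define $\beta\colon\widetilde G\to\widetilde G$ fibrewise over the two cosets of $G$ by $\beta(g)=\sigma_2(g)$ and $\beta(tg)=t\sigma_2(g)$ for $g\in G$.

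The crux is to verify that $\beta$ is a colour-preserving automorphism that is not affine. Edges inside each coset of $G$ are immediate: on $G$ this is just that $\sigma_2$ preserves colours on $\Cay(G,C)$, and on $tG$ one uses only that $\sigma_2(gc)\in\{\sigma_2(g)c,\sigma_2(g)c^{-1}\}$, so the colour class $\{c,c^{-1}\}$ is respected. The edges coloured $t$ that join the two cosets are preserved exactly when $\sigma_2\circ\iota=\iota\circ\sigma_2$, so the whole computation collapses to this single commuting identity — and this is the step I expect to be the main obstacle. It holds because $\iota$ acts as inversion on the entire rotation subgroup $R$, and inversion is central in $\Aut(R)\cong\mathrm{GL}_2(\Z_n)$, hence commutes with the linear coordinate-swap induced by $\sigma_2$; a short separate check settles the reflection coset $\tau R$. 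The delicate point is that $t$ must invert \emph{both} cyclic factors: the naive choice in which $t$ inverts only the central $\langle z\rangle$ also gives a group isomorphic to $D_{2n}\times D_{2n}$, but it fails to commute with the factor-swap performed by $\sigma_2$. Finally $\beta$ fixes the identity and restricts to $\sigma_2$ on $G$, so were it affine it would be an automorphism of $\widetilde G$ preserving $G$, forcing $\sigma_2\in\Aut(G)$, which we have excluded. Hence $\Cay(\widetilde G,\,C\cup\{t\})$ is a non-CCA Cayley graph and $D_{2n}\times D_{2n}$ is non-CCA.
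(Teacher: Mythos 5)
Your proposal is correct, and while it follows the paper's overall strategy---extend $G=C_n\times D_{2n}$ by an involution to obtain $D_{2n}\times D_{2n}$, adjoin that involution to the connection set $C$, and extend $\sigma_2$ coset-by-coset---the execution is genuinely different at the technical level, and both versions work. Your formula for $\sigma_2$ (coordinate swap $z^ar^b\mapsto z^br^a$ on $R$, and $\tau z^ar^b\mapsto\tau z^{-b}r^{-a}$) is correct for the paper's labelling, and your commuting identity $\sigma_2\circ\iota=\iota\circ\sigma_2$ does hold on all of $G$: on $R$ it is the centrality of $-I$ in $\Aut(R)\cong\mathrm{GL}_2(\Z_n)$, and on $\tau R$ both composites send $\tau z^ar^b$ to $\tau z^{b}r^{a}$. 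The paper instead adjoins $\gamma$ inverting $\rho_1\rho_2=z$ and centralising $\rho_1^{-1}\rho_2$ and $\tau$ (so $\gamma$ centralises the entire dihedral factor of $G$), uses $\Cay(H,C\cup\{\gamma\})$, and defines the extension on \emph{right} cosets, $\varphi(g\gamma)=\sigma_2(g)\gamma$; with that convention the cross-coset edges are automatic and the work is exactly dual to yours: the edges inside $G\gamma$ require an explicit colour computation. Note that your $t$ coincides with the paper's $\gamma\tau$, so the two extensions are literally the same group; what differs is the connection set ($C\cup\{t\}$ versus $C\cup\{\gamma\}$) and which family of edges carries the burden of proof. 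In particular, your caveat that the ``naive choice'' (inverting only $\langle z\rangle$) fails is true only within your left-coset convention $\beta(tg)=t\sigma_2(g)$; it is not a defect of that extension itself, since the paper's right-coset convention succeeds with precisely that choice. The trade-off: your route reduces colour-preservation to a single conceptual identity and is essentially computation-free, but it forces a careful (and correct, in your case) identification of $\sigma_2$ in $(z,r)$-coordinates and the right choice of $\iota$; the paper's route tolerates an involution that does not commute with $\sigma_2$, at the price of a longer hands-on verification. Your non-affineness argument ($\beta$ fixes $e$ and preserves $G$ setwise, yet $\sigma_2$ sends the central $z$ to the non-central $r$, so $\sigma_2\notin\Aut(G)$) is also correct, and somewhat cleaner than the paper's appeal to arc-regularity.
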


\begin{proof}
Define $H=\langle G, \gamma\rangle$, where $\gamma$ commutes with $\tau$ and with $\rho_1^{-1}\rho_2$, and inverts $\rho_1\rho_2$. Notice that this implies $H \cong D_{2n}\times D_{2n}$.

By Theorem~\ref{thm:cnxd2n},  if $G=\langle \rho_1, \rho_2, \tau\rangle$ and $C=\{\tau\} \cup \{\rho_2^i: 1 \le i \le n-1\}$, then $\sigma_2$ is a non-affine automorphism on $\Cay(G,C)$. We will use this to produce a non-affine colour-preserving automorphism $\varphi$ on $\Gamma=\Cay(H, C \cup \{\gamma\})$.

Define $\varphi$ by $\varphi(g)=\sigma_2(g)$, and $\varphi(g\gamma)=\sigma_2(g)\gamma$  for every $g \in G$. Since $\sigma_2$ is not affine on $G$, it is straightforward to see that $\varphi$ is not affine on $H$. Indeed, we can use the same argument as in the proof of Theorem~\ref{thm:cnxd2n} to show that since $\varphi$ restricted to $G$ (which is $\sigma_2$) does not normalise $G$, neither can $\varphi$ as a whole. It remains to show that $\varphi$ is colour-preserving on $\Gamma$.

Consider any edge $e$ of $\Gamma$. If both endpoints of $e$ are in $G$ then $\varphi(e)=\sigma_2(e)$ and since $\sigma_2$ preserves colours, so does $\varphi$.

If one endpoint of $e$ is in $G$ and the other is not, then it must be the case that $e$ is coloured $\gamma$, and its endpoints are $g$ and $g\gamma$ for some $g \in G$. Furthermore, by definition of $\varphi$ we have $\varphi(g\gamma)=\varphi(g)\gamma$, so there is an edge between $\varphi(g)$ and $\varphi(g\gamma)$, and its colour is $\gamma$. Thus $\varphi$ also preserves the colour of any such edge.

The final case to consider is if both endpoints of $e$ are in $G\gamma$. Suppose the endpoints of $e$ are $\rho_1^{i_1}\rho_2^{i_2}\tau^e\gamma$ and $\rho_1^{j_1}\rho_2^{j_2}\tau^f\gamma$, where $0 \le i_1, i_2, j_1, j_2 \le n-1$, and $0 \le e,f \le 1$. Since there is an edge between these vertices, we must have $\gamma\tau^{e}\rho_1^{j_1-i_i}\rho_2^{j_2-i_2}\tau^f\gamma \in C$ (recall that $\gamma$ and $\tau$ are both involutions). Note that $\rho_1^a\rho_2^b=(\rho_1\rho_2)^{(a+b)/2}(\rho_1^{-1}\rho_2)^{(b-a)/2}$; we want to use this because we know that $\gamma$ commutes with $\tau$ and with $\rho_1^{-1}\rho_2$ but inverts $\rho_1\rho_2$. So we have \begin{eqnarray*}\gamma\tau^{e}(\rho_1\rho_2)^{(j_1+j_2-i_1-i_2)/2}(\rho_1^{-1}\rho_2)^{(j_2+i_1-i_2-j_1)/2}\tau^f\gamma &=&\tau^e(\rho_1\rho_2)^{(i_1+i_2-j_1-j_2)/2}(\rho_1^{-1}\rho_2)^{(j_2+i_1-i_2-j_1)/2}\tau^f \\&=&\tau^e \rho_1^{i_2-j_2}\rho_2^{i_1-j_1}\tau^f\in C.\end{eqnarray*}
Since we know the elements of $C$, this implies one of three possibilities: 
\begin{itemize}
\item the element is $\tau$, so that $i_2=j_2$ and $i_1=j_1$, and $\{e,f\}=\{0,1\}$; 
\item $e=f=0$ and the element is $\rho_2^j$ for some $1\le j \le n-1$, so $i_2=j_2$, and $j=i_1- j_1$); or
\item $e=f=1$ and the element is $\rho_2^j$ for some $1 \le j \le n-1$, so (using the above equation and the fact that $\tau$ commutes with $\rho_1\rho_2$ and inverts $\rho_1^{-1}\rho_2$) $i_1=j_1$, and $j=j_2 - i_2$.
\end{itemize}

We now need to understand the images of these endpoints under $\varphi$. These are $\rho_1^{i_1}\rho_2^{-i_2}\tau^e\gamma$ and $\rho_1^{j_1}\rho_2^{-j_2}\tau^f\gamma$. Now using similar calculations to those in the previous paragraph, the colour of the edge between these is 
$$\tau^e\rho_1^{j_2-i_2}\rho_2^{i_1-j_1}\tau^f$$ (together with its inverse).
Taking the three possibilities identified above in turn, if $i_1=j_1$, $i_2=j_2$, and $\{e,f\}=\{0,1\}$ then this colour is $\tau$ as before, so $\varphi$ has preserved the colour. If $e=f=0$, $i_2=j_2$, and the colour of $e$ was $\{\rho_2^j, \rho_2^{-j}\}$ where $j=i_1-j_1$, then the colour of this edge is also $\{\rho_2^j,\rho_2^{-j}\}$. Finally, if $e=f=1$, $i_1=j_1$, and the colour of $e$ was $\{\rho_2^j,\rho_2^{-j}\}$ where $j=j_2-i_2$, then the colour of this edge is $\{\rho_2^{j},\rho_2^{-j}\}$. So in all cases the colour of $e$ is preserved under the action of $\varphi$. This completes the proof.
\end{proof}


\begin{thebibliography}{99}
\bibitem{gi-graphs} M.~D.E.~Conder, T.~Pisanski and A.~\v{Z}itnik, {GI}-graphs: a new class of graphs with many symmetries, J. Algebraic Combin. \textbf{40} (2014), no.1, 209--231.

\bibitem{odd-square-free} E.~Dobson, A.~Hujdurovi\'c, K.~Kutnar and J.~Morris, On color-preserving automorphisms of {C}ayley graphs of odd square-free order, textbf{44} (2017), 407--422.

\bibitem{GAP4}
  The GAP~Group, \emph{GAP -- Groups, Algorithms, and Programming, 
  Version 4.8.6}; 
  2017,
  \url{http://www.gap-system.org}.

\bibitem{col-pres} A.~Hujdurovi\'c, K.~Kutnar, D.~W.~Morris and J.~Morris, On colour-preserving automorphisms of {C}ayley graphs, Ars. Math. Contemp. \textbf{11} (2016), 189--213.

\bibitem{sage}SageMath, the Sage Mathematics Software System (Version 7.6),
   The Sage Developers, 2017, \url{http://www.sagemath.org}.

\bibitem{gtac} J.~Koolen, J.~H.~Kwak and M.~Xu, Applications of Group Theory to Combinatorics, CRC Press (2018).

\bibitem{cca-simple} L.~Morgan, J.~Morris and G.~Verret, A finite simple group is {CCA} if and only if it has no element of order four, arXiv:1703.07905.

\bibitem{sylow-cyclic} L.~Morgan, J.~Morris and G.~Verret, Characterising {CCA} {S}ylow cyclic groups whose order is not divisible by four, Ars. Math. Contemp. \textbf{14} (2018), 83--95.

\bibitem{aut-circ-part} J.~Morris, Automorphisms of circulants that respect partitions, Contributions to Discrete Mathematics \textbf{11} (2016), 1--6.

\bibitem{ggs} A.~T.~White, Graphs of GRoups on Surfaces, North-Holland, Elsevier (2001).

%\bibitem{Hetzel}D.~Hetzel, \"{U}ber regul\"{a}re graphische Darstellung von aufl\"{o}sbaren Gruppen, Technische Universit\"{a}t, Berlin, 1976.
%\bibitem{Imrich1}W.~Imrich, Graphen mit transitiver Automorphismengruppen, \textit{Monatsh. Math. }\textbf{73} (1969), 341--347.
%\bibitem{Imrich2}W.~Imrich, Graphs with transitive abelian automorphism group, \textit{Combinat. Theory (Proc. Colloq., Balatonf{\H{u}}red, 1969)}, Budapest, 1970, 651--656.
%\bibitem{Imrich3}W.~Imrich,
%On graphs with regular groups, \textit{J. Combinatorial Theory Ser. B} \textbf{19} (1975), 174--180. 
%\bibitem{ImrichW} W.~Imrich and M.~E.~Watkins, On automorphism groups of Cayley graphs, Period. Math. Hungar. \textbf{7} (1976), 243--258.
%\bibitem{Now-Wat1} L.~Nowitz and M.~E.~Watkins, Graphical Regular Representations of non-abelian groups I, Canadian J. Math. \textbf{24} (1972), 993--1008.
%\bibitem{Now-Wat2} L.~Nowitz and M.~E.~Watkins, Graphical Regular Representations of non-abelian groups II, Canadian J. Math. \textbf{24} (1972), 1009--1018.
%\bibitem{Watkins} M.~E.~Watkins, On the action of non-abelian groups on graphs, J. Combin. Theory \textbf{11} (1971), 95--104.
%\bibitem{Wat-Q} M.~E.~Watkins, On Graphical Regular Representations of $C_n \times Q$, Graph theory and its applications, Lecture Notes in Mathematics \textbf{303}, Springer, Berlin 1972, 305--311.
%\bibitem{Wat-alt} M.~E.~Watkins, Graphical Regular Representations of alternating, symmetric, and miscellaneous small groups, Aequat. Math. \textbf{11} (1974), 40--50.
%\bibitem{CompleteCCA} D.~Byrne, M.~Donner and T.~Sibley, Groups of graphs of groups, Beitr. Algebra Geom. \textbf{54} (2013), 323--332.
%\bibitem{Godsil}C.~D.~Godsil, GRRs for nonsolvable groups, \textit{Algebraic Methods in Graph Theory,}  (Szeged, 1978), 221--239, \textit{Colloq. Math. Soc. J\'{a}nos Bolyai} \textbf{25}, North-Holland, Amsterdam-New York, 1981.
\end{thebibliography}
\end{document}